\newtheorem{thm}{Theorem}[section]
\newtheorem{cor}[thm]{Corollary}
\newtheorem{lem}[thm]{Lemma}
\newtheorem{prop}[thm]{Proposition}
\newtheorem{defn}[thm]{Definition}
\begin{document}
\title[Solvable Leibniz algebras with triangular nilradicals] {Solvable Leibniz algebras with triangular nilradicals }
\author{ I.A. Karimjanov, A.Kh. Khudoyberdiyev, B. A. Omirov }
\address{[ I.A. Karimjanov --- A.Kh. Khudoyberdiyev --- B.A. Omirov] National University of Uzbekistan, Institute of Mathematics, 29, Do'rmon yo'li street., 100125, Tashkent (Uzbekistan)} \email{iqboli@gmail.com --- khabror@mail.ru --- omirovb@mail.ru}

%\address{}%
%\email{}%

\begin{abstract}
In this paper the description of solvable Lie algebras with
triangular nilradicals is extended to Leibniz algebras. It is
proven that the matrices of the left and right operators on
elements of Leibniz algebra have upper triangular forms. We
establish that solvable Leibniz algebra of a maximal possible
dimension with a given triangular nilradical is a Lie algebra.
Furthermore, solvable Leibniz algebras with triangular nilradicals
of low dimensions are classified.
\end{abstract}

\maketitle

\textbf{Mathematics Subject Classification 2010}: 17A32, 17A36, 17A65, 17B30.

\textbf{Key Words and Phrases}: Lie algebra, Leibniz algebra, solvability, nilpotency, nilradical, derivation, nil-independence.

% ----------------------------------------------------------------
\section{Introduction}

Leibniz algebras were introduced at the beginning of the 90s of
the past century by J.-L. Loday \cite{Lod}. They are a
generalization of well-known Lie algebras, which admit a
remarkable property that an operator of right multiplication is a
derivation.

From the classical theory of Lie algebras it is well known that
the study of finite-dimensional Lie algebras was reduced to the
nilpotent ones \cite{Mal}, \cite{Mub}. In the Leibniz algebra case
there is an analogue of Levi's theorem \cite{Barn}. Namely, the
decomposition of a Leibniz algebra into a semidirect sum of its
solvable radical and a semisimple Lie algebra is obtained. The
semisimple part can be described from simple Lie ideals (see
\cite{Jac}) and therefore, the main focus is to study the solvable
radical.

The analysis of several works devoted to the study of solvable Lie
algebras (for example \cite{An1,An2,Ngomo,Wint,Wang}, where
solvable Lie algebras with various types of nilradical were
studied, such as naturally graded filiform and quasi-filiform
algebras, abelian, triangular, etc.) shows that we can also apply
similar methods to solvable Leibniz algebras with a given
nilradical. In fact, any solvable Lie algebra can be represented
as an algebraic sum of a nilradical and its complimentary vector
space.  Mubarakdjanov proposed a method, which claims that the
dimension of the complimentary vector space does not exceed the
number of nil-independent derivations of the nilradical
\cite{Mub}. Extension of this method to Leibniz algebras is shown
in \cite{Cas1}. Usage of this method yields a classification of
solvable Leibniz algebras with given nilradicals in
\cite{Cas1,Cas2,CanKhud,Lad,Lin}.

In this article we present the description of solvable Leibniz
algebras whose nilradical is a Lie algebra of upper triangular
matrices. Since in the work \cite{Wint} solvable Lie algebras with
triangular nilradical are studied, we reduce our study to non-Lie
Leibniz algebras.

Recall, that in \cite{Wint} solvable Lie algebras with triangular
nil-radicals of minimum and maximum  possible dimensions were
described. Moreover, uniqueness of a Lie algebra of maximal
possible dimension with a given triangular nilradical is
established.

In order to realize the goal of our study we organize the paper as
follows. In Section 2 we give the necessary preliminary results.
Section 3 is devoted to the description of a finite-dimensional
solvable Leibniz algebras with upper triangular nilradical. We
establish that such Leibniz algebras of minimum and maximum
possible dimensions are Lie algebras.  Finally, in Section 4 we
present complete description of the results of Section 3 in low
dimensions.

Throughout the paper we consider finite-dimensional vector spaces
and algebras over the field $\mathbb{C}$. Moreover, in the
multiplication table of an algebra omitted products are assumed to
be zero and if it is not stated otherwise, we will consider
non-nilpotent solvable algebras.

\section{Preliminaries}

In this section we give the basic concepts and the results used in
the studying of Leibniz algebras with triangular nilradicals.

\begin{defn} An algebra $(L,[-,-])$ over a field $F$ is called a Leibniz algebra if for any $x,y,z\in L$ the so-called Leibniz identity
$$[x,[y,z]]=[[x,y],z] - [[x,z],y]$$  holds.
\end{defn}

Every Lie algebra is a Leibniz algebra, but the bracket in the
Leibniz algebra does not possess a skew-symmetric property.

\begin{defn}For a given Leibniz algebra $L$  the sequences of two-sided ideals defined recursively as follows:
$$L^1=L, \ L^{k+1}=[L^k,L],  \ k \geq 1, \quad\quad
L^{[1]}=L, \ L^{[s+1]}=[L^{[s]},L^{[s]}], \ s \geq 1.
$$
are called the lower central and the derived series of $L$,
respectively.
\end{defn}

\begin{defn} A Leibniz algebra $L$ is said to be
nilpotent (respectively, solvable), if there exists $n\in\mathbb N$ ($m\in\mathbb N$) such that $L^{n}=0$ (respectively, $L^{[m]}=0$).
%The minimal number $n$ (respectively, $m$) with such property is said to be the index of
%nilpotency (respectively, of solvability) of the algebra $L.$
\end{defn}

It is easy to see that the sum of any two nilpotent ideals is
nilpotent. Therefore the maximal nilpotent ideal always exists.

 \begin{defn}
  The maximal nilpotent ideal of a Leibniz algebra is said to be a nilradical of the algebra.
  \end{defn}

Recall, that a linear map $d : L \rightarrow L$ of a Leibniz
algebra $L$ is called a derivation if for all $x, y \in L$ the
following  condition holds: $$d([x,y])=[d(x),y] + [x, d(y)].$$

For a given element $x$ of a Leibniz algebra $L$ we consider a
right multiplication operators $R_x : L \to L$ defined by
$R_x(y)=[y,x], \forall y \in L$ and the left multiplication
operators $L_x : L \to L$ defined by $L_x(y)=[x,y], \forall y \in
L$. It is easy to check that operator $R_x$ is a derivation. This
kind of derivations are called {\it inner derivations}.

Linear maps $f_1,...,f_k$ are called {\it nil-independent}, if
$$\alpha_1f_1+\alpha_2f_2+...+\alpha_kf_k$$ is not nilpotent for all values $\alpha_i,$ except simultaneously zero.

Let $R$ be a solvable Leibniz algebra with a nilradical $N$. We
denote by $Q$ the complementary vector space  of the nilradical
$N$ in the algebra $R.$

\begin{prop}\cite{Cas1} Let $R$ be a solvable Leibniz algebra and $N$ its nilradical. Then the
dimension of the complementary vector space $Q$ is not greater
than the maximal number of nil-independent derivations of $N$.
\end{prop}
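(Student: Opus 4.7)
The plan is to attach to each element of $Q$ an honest derivation of $N$, then show that the resulting $\dim Q$ derivations are nil--independent; the desired inequality will then be immediate.

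Fix a basis $\{y_1,\dots,y_k\}$ of $Q$. For each $y_i$, the inner derivation $R_{y_i}$ of $R$ preserves the two--sided ideal $N$, so $R_{y_i}|_N\in\mathrm{Der}(N)$. I would prove the following statement: the derivations $R_{y_1}|_N,\dots,R_{y_k}|_N$ are nil--independent in $\mathrm{Der}(N)$. Since nil--independence implies linear independence, together with the definition this will give
\[
\dim Q \;=\; k \;\le\; \max\bigl\{\,\text{number of nil--independent derivations of } N\,\bigr\}.
\]

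For the nil--independence I would argue by contradiction. Suppose some non--trivial combination $\alpha_1R_{y_1}|_N+\cdots+\alpha_kR_{y_k}|_N$ is nilpotent. Put $y=\sum\alpha_i y_i\in Q\setminus\{0\}$, so $R_y|_N$ is nilpotent. I would aim to show that $I:=N+\mathbb{C}y$ is a \emph{nilpotent ideal} of $R$, which contradicts the maximality of the nilradical (it would force $y\in N$). For the ideal property I would use solvability: $[R,R]\subseteq N$, hence $[R,y]$ and $[y,R]$ lie in $N\subseteq I$, and $N$ being an ideal takes care of the remaining brackets. Observe also that $[y,y]\in[R,R]\subseteq N$, which will be needed below.

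The main obstacle is proving that $I$ is nilpotent. I would invoke the Leibniz version of Engel's theorem, which reduces the task to showing that $R_z|_I$ is a nilpotent endomorphism for every $z\in I$. Writing $z=n+\beta y$ with $n\in N$, $\beta\in\mathbb{C}$, one checks immediately that $R_z(I)\subseteq N$ (using $R_z(y)=[y,n]+\beta[y,y]\in N$), so the nilpotency of $R_z|_I$ is equivalent to the nilpotency of
\[
R_z|_N \;=\; R_n|_N+\beta R_y|_N .
\]
Each $R_n|_N$ is nilpotent because $N$ itself is nilpotent, and $R_y|_N$ is nilpotent by assumption; however, nilpotency of a sum of nilpotent operators is the genuinely delicate point. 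I would treat this by working inside the Lie subalgebra
\[
\mathfrak{g}\;=\;\{\,R_x|_N\,:\,x\in I\,\}\subseteq\mathrm{End}(N),
\]
which is closed under commutator thanks to the identity $[R_x,R_{x'}]=-R_{[x,x']}$ in any Leibniz algebra and the fact that $I$ is an ideal. The algebra $\mathfrak{g}$ is solvable (it is a homomorphic image of the solvable algebra $I$), so by Lie's theorem over $\mathbb{C}$ there is a basis of $N$ in which every element of $\mathfrak{g}$ is upper triangular; since the generators $R_n|_N$ and $R_y|_N$ are nilpotent, their diagonal entries vanish, and this forces every element of $\mathfrak{g}$ to be strictly upper triangular, hence nilpotent. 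This yields nilpotency of all $R_z|_N$, and through the Leibniz Engel theorem, nilpotency of $I$, completing the contradiction and the proof.
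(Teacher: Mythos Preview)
The paper does not give its own proof of this proposition; it is quoted from \cite{Cas1} and used as a black box. Your argument is essentially the standard one behind that citation (and behind Mubarakzjanov's original Lie--algebra version): restrict the inner derivations $R_{y_i}$ to $N$, and show that a nil--dependence among them would let you enlarge the nilradical.

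Your proof is correct. Two points are worth making explicit, since you pass over them quickly. First, the inclusion $[R,R]\subseteq N$ that you invoke ``from solvability'' is itself a theorem for Leibniz algebras over $\mathbb{C}$ (proved, e.g., by triangularising the solvable Lie algebra $\{R_x:x\in R\}$ via Lie's theorem and then applying Engel), not a triviality; it is exactly what is established in \cite{Cas1} just before the proposition. Second, calling $\mathfrak{g}$ a ``homomorphic image'' of $I$ is slightly loose, since $x\mapsto R_x|_N$ is an anti--homomorphism from a Leibniz algebra to a Lie algebra; what matters, and what you use, is that $\mathfrak{g}^{[k]}\subseteq\{R_w|_N:w\in I^{[k]}\}$, so solvability of $I$ forces solvability of $\mathfrak{g}$. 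With those two remarks the argument goes through cleanly: Lie's theorem triangularises $\mathfrak{g}$, the generators $R_n|_N$ and $R_y|_N$ are nilpotent hence strictly upper triangular, so every $R_z|_N$ with $z\in I$ is nilpotent, and the Leibniz Engel theorem makes $I$ a nilpotent ideal properly containing $N$, a contradiction.
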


Let us consider a finite-dimensional Lie algebra  $T(n)$ of
upper-triangular matrices with $n\geq3$ over the field of complex
numbers. The products of the basis elements $\{N_{ij} \ | \ 1\leq
i<j\leq n \}$ of $T(n)$, where $N_{ij}$ is a matrix with the only
non-zero entry at $i$-th row and $j$-th column equal to 1, can be
computed by
$$[N_{ij},N_{kl}]=\delta_{jk}N_{il}-\delta_{il}N_{kj}.$$

For a natural number $f$ let $G(n,f)$ be a set of solvable Lie
algebras of dimension $\frac{1}{2}n(n-1)+f$ with nilradical
$T(n).$ Let $Q=<X^1,X^2,\dots,X^f>,$ where $Q$ is the
complementary vector space of the nilradical $T(n)$ to an algebra
from $G(n,f)$.

Denote

\begin{equation}\label{eq1}
[N_{ij},X^{\alpha}]=\sum\limits_{1\leq q-p<n}a^{\alpha}_{ij,pq} N_{pq}, \quad  [X^{\alpha}, N_{ij}]=\sum\limits_{1\leq q-p<n}b^{\alpha}_{ij,pq} N_{pq}, \quad [X^{\alpha},X^{\beta}]=\sum\limits_{1\leq q-p<n}\sigma^{\alpha\beta}_{pq} N_{pq},
\end{equation}
where $1\leq\alpha,\beta\leq f$ and $a^{\alpha}_{ij,pq},
b^{\alpha}_{ij,pq}, \sigma^{\alpha\beta}_{pq}\in \mathbb{C}, \
p<q\leq n.$

Let $N$ be a vector column $(N_{12} \ N_{23}\ \dots \ N_{(n-1)n} \
N_{13} N_{24}\dots \ N_{(n-2)n}\dots \ N_{1n})^T$   then we have
$$ R_{X^{\alpha}}(N)=A^{\alpha}N, \quad
L_{X^{\alpha}}(N)=B^{\alpha}N,$$ where
$A^{\alpha}=(a^{\alpha}_{ij,pq})$ and
$B^{\alpha}=(b^{\alpha}_{ij,pq}), \ 1\leq i<j\leq n, \ 1\leq
p<q\leq n$  are $\frac12 n(n-1)\times \frac12 n(n-1)$ complex
matrices.

The following lemma provides some information about the structure
matrices above.

\begin{lem}\cite{Wint} \label{lem2.4}The structure matrices   $A^{\alpha}=(a_{ij,pq}^{\alpha}), \ 1\leq i<j\leq n, \ 1\leq p<q\leq n$ have the following properties:

$(i)$ They are upper triangular;

$(ii)$ The only off-diagonal matrix elements that do not vanish identically and cannot be annuled by a redefinition of the elements $X^\alpha$ are: $$a_{12,2n}^{\alpha}, \ \ a_{i(i+1),1n}^{\alpha} \ (2\leq i\leq n-2), \ \ a_{(n-1)n,1(n-1)}^{\alpha},$$

$(iii)$ The diagonal elements  $a_{i(i+1),i(i+1)}^{\alpha}, \
1\leq i \leq n-1$ are free to vary. The other diagonal elements
satisfy
$$a_{ik,ik}^{\alpha}=\sum\limits_{p=i}^{k-1}a_{p(p+1),p(p+1)}^{\alpha},
\ k>i+1.$$
\end{lem}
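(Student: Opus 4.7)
The plan is to exploit that $R_{X^{\alpha}}$, being a right multiplication in a Leibniz algebra, is automatically a derivation of $R$ (a direct consequence of the Leibniz identity), and that its restriction $D := R_{X^{\alpha}}|_N$ to the two-sided ideal $N \cong T(n)$ is therefore a derivation of the Lie algebra $T(n)$. The entire lemma then reduces to describing $\mathrm{Der}(T(n))$ modulo the inner derivations induced by replacements $X^{\alpha} \mapsto X^{\alpha} + y$ with $y \in N$, since such a replacement alters $D$ precisely by $\mathrm{ad}_y$ and does not change the quotient $R/N$ nor the nilradical.

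For parts $(i)$ and $(iii)$ I would expand the derivation identity
\[
D([N_{ij}, N_{kl}]) = [D(N_{ij}), N_{kl}] + [N_{ij}, D(N_{kl})]
\]
using the structure constants $[N_{ij}, N_{kl}] = \delta_{jk}N_{il} - \delta_{il}N_{kj}$. Ordering the basis by height $j-i$ and inducting on $j-i$ via the relation $N_{ij} = [N_{i,i+1}, N_{i+1,j}]$ forces $a^{\alpha}_{ij,pq} = 0$ whenever $q-p < j-i$, which is $(i)$. Iterating the derivation identity on the nested bracket $N_{ik} = [N_{i,i+1},[N_{i+1,i+2},\dots,N_{k-1,k}]\cdots]$ and extracting the coefficient of $N_{ik}$ produces the telescoping relation $a^{\alpha}_{ik,ik} = \sum_{p=i}^{k-1} a^{\alpha}_{p(p+1),p(p+1)}$, while the simple-root diagonal entries $a^{\alpha}_{i,i+1;i,i+1}$ remain unconstrained; this gives $(iii)$.

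For $(ii)$ I would proceed candidate by candidate: for every pair $(ij) \ne (pq)$ with $q-p > j-i$, record the constraints coming from applying $D$ both to the relations $N_{il} = [N_{ij}, N_{jl}]$ and to the vanishing brackets $[N_{ij}, N_{kl}] = 0$ (for $j \ne k$ and $i \ne l$). Most candidate entries are killed outright by these linear constraints. The few surviving classes are then tested against the admissible redefinitions $X^{\alpha} \mapsto X^{\alpha} + \sum c_{rs} N_{rs}$, which modify $D$ by $\sum c_{rs}\,\mathrm{ad}_{N_{rs}}$; choosing the $c_{rs}$'s appropriately annihilates every remaining off-diagonal entry except those in the three families $a^{\alpha}_{12,2n}$, $a^{\alpha}_{i(i+1),1n}$ with $2 \le i \le n-2$, and $a^{\alpha}_{(n-1)n,1(n-1)}$.

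The main obstacle is exactly this final bookkeeping in $(ii)$: checking that no further bracket identity in $T(n)$ forces the three listed families to vanish, and that no permissible inner-derivation redefinition of $X^{\alpha}$ can absorb them either. Conceptually this is the computation of the relevant part of $H^1(T(n), T(n))$, where these three families represent the outer derivation classes that persist beyond the scaling derivations recorded in $(iii)$. The verification is a finite but lengthy linear-algebra calculation dictated by the triangular structure constants, and it is the only genuinely computational part of the argument.
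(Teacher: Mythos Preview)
The paper does not give its own proof of this lemma; it is quoted verbatim from \cite{Wint} and left unproved. The only hint the paper offers is the remark at the start of Section~3 that the argument in \cite{Wint} ``uses only the property of derivation,'' so that it transfers unchanged to the Leibniz setting for the matrices $A^{\alpha}$.

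Your proposal is exactly this argument: restrict $R_{X^{\alpha}}$ to a derivation $D$ of $T(n)$, expand the derivation identity against the structure constants $[N_{ij},N_{kl}]=\delta_{jk}N_{il}-\delta_{il}N_{kj}$ to force the block upper-triangular shape and the telescoping diagonal relation, and then absorb the remaining off-diagonal entries into inner derivations via $X^{\alpha}\mapsto X^{\alpha}+y$ with $y\in N$, leaving only the three listed families. This is the correct strategy and matches what the paper indicates about the cited proof; there is nothing further to compare.
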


\begin{lem} \cite{Wint} The maximal number of non-nilpotent elements is
$$f_{max} = n - 1.$$
\end{lem}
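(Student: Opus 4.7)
The plan is to use Proposition 2.5 to bound $f$ by the maximal number of nil-independent derivations of $T(n)$ induced by the outer elements $X^\alpha$, and then read off this number from the structure revealed in Lemma \ref{lem2.4}. The restriction of $R_{X^\alpha}$ to the nilradical is encoded by the matrix $A^\alpha$, so the question reduces to: how many matrices of the form described in Lemma \ref{lem2.4} can be chosen to be nil-independent?

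First I would observe that by property $(i)$ of Lemma \ref{lem2.4} each $A^\alpha$ is upper triangular, and any linear combination $\sum_{\alpha}c_\alpha A^\alpha$ is again upper triangular; such a matrix is nilpotent if and only if all of its diagonal entries vanish. By property $(iii)$ the entire diagonal of $A^\alpha$ is determined by the $n-1$ numbers
\[
\bigl(a^{\alpha}_{12,12},\ a^{\alpha}_{23,23},\ \dots,\ a^{\alpha}_{(n-1)n,(n-1)n}\bigr)\in\mathbb{C}^{n-1},
\]
since every other diagonal element equals a sum of consecutive ones among these. Consequently $\sum_\alpha c_\alpha A^\alpha$ is nilpotent precisely when $\sum_\alpha c_\alpha a^{\alpha}_{i(i+1),i(i+1)}=0$ for all $1\le i\le n-1$.

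The upper bound $f\le n-1$ follows immediately: a nil-independent collection $A^1,\dots,A^f$ yields, via the assignment above, linearly independent vectors in $\mathbb{C}^{n-1}$, and Proposition 2.5 then forces $\dim Q\le n-1$. For the lower bound I would exhibit $n-1$ outer elements whose diagonal vectors form the standard basis of $\mathbb{C}^{n-1}$: concretely, take $X^\alpha$ so that $a^{\alpha}_{i(i+1),i(i+1)}=\delta_{\alpha i}$, with the off-diagonal entries allowed by $(ii)$ set to zero and the pairwise brackets $[X^\alpha,X^\beta]$ chosen trivially. Such elements define nil-independent derivations of $T(n)$ (indeed, the basis derivations realizing the diagonal torus of $\mathrm{Der}(T(n))$), so the bound is attained.

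The main obstacle I anticipate is strictly bookkeeping rather than conceptual: verifying that the candidate algebra built in the lower-bound construction really satisfies the Jacobi identities (equivalently, that the chosen matrices commute appropriately when brackets $[X^\alpha,X^\beta]\in T(n)$ are fixed). Since Lemma \ref{lem2.4} already parametrizes all admissible $A^\alpha$, this reduces to a routine check on the diagonal torus action and does not affect the counting argument that produces $f_{\max}=n-1$.
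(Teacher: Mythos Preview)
The paper does not give a proof of this lemma; it is quoted from \cite{Wint} and stated without argument. Your proposal is correct and is essentially the argument in \cite{Wint}: by Lemma~\ref{lem2.4} the matrices $A^\alpha$ are upper triangular with diagonals determined linearly by the $(n-1)$-tuple $\bigl(a^\alpha_{12,12},\dots,a^\alpha_{(n-1)n,(n-1)n}\bigr)$, so nil-independence of $A^1,\dots,A^f$ is exactly linear independence of these tuples in $\mathbb{C}^{n-1}$, giving $f\le n-1$; the bound is realized by the commuting diagonal derivations with $a^\alpha_{i(i+1),i(i+1)}=\delta_{\alpha i}$, for which the Jacobi check is immediate because the corresponding $A^\alpha$ pairwise commute and one may take $[X^\alpha,X^\beta]=0$.
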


\section{Main result}

We denote by $L(n,f)$ a set of all non-nilpotent solvable  Leibniz algebras with nilradical $T(n)$ and a complementary vector space $<X^1,X^2,...,X^f>.$

Using notations (\ref{eq1}) we have  $$
R_{X^{\alpha}}(N)=A^{\alpha}N, \ L_{X^{\alpha}}(N)=B^{\alpha}N,$$
where $A^{\alpha}=(a^{\alpha}_{ij,pq})$ è
$B^{\alpha}=(b^{\alpha}_{ij,pq}), \ 1\leq i<j\leq n, \ 1\leq
p<q\leq n.$

Since the proof of the assertions concerning the elements of the
matrix $A^{\alpha}$ in Lemma \ref{lem2.4} uses only the property
of derivation, one can check that it obviously extends to our case
of Leibniz algebras. For the matrix $B^{\alpha}$ however, we have
the next result.

\begin{lem} \label{lem3.1} The following relations hold:
$$b^\alpha_{ij,pq}=-a^\alpha_{ij,pq}, \quad i+1 < j, \quad (p,q)\neq (1,n)$$

\end{lem}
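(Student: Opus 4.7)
The approach I would take rests on a universal Leibniz identity: for any $x,y$ in a Leibniz algebra, the symmetrised bracket $[x,y]+[y,x]$ is annihilated on the left by every element of the algebra, since a direct expansion gives
\[
[z,[x,y]+[y,x]] = [[z,x],y]-[[z,y],x]+[[z,y],x]-[[z,x],y]=0
\]
for every $z$. Specialising $x=X^{\alpha}$, $y=N_{ij}$ and introducing
\[
w := [X^{\alpha},N_{ij}]+[N_{ij},X^{\alpha}],
\]
the coordinates of $w$ on the basis $\{N_{pq}\}$ of $T(n)$ are precisely the sums $a^{\alpha}_{ij,pq}+b^{\alpha}_{ij,pq}$ the lemma is about.

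The plan then has three clean steps. First, since $N=T(n)$ is a two-sided ideal of $R$, both $L_{X^{\alpha}}(N_{ij})$ and $R_{X^{\alpha}}(N_{ij})$ lie in $T(n)$, so $w\in T(n)$. Second, the universal identity above gives $[z,w]=0$ for every $z\in R$; specialising $z=N_{pq}$ and using antisymmetry of the bracket inside the Lie algebra $T(n)$ shows that $w$ belongs to the centre of $T(n)$. Third, for $n\ge 3$ this centre is one-dimensional and spanned by $N_{1n}$, a fact that follows at once from the structure constants $[N_{ij},N_{kl}]=\delta_{jk}N_{il}-\delta_{il}N_{kj}$. Reading off the coordinate of $w$ on each $N_{pq}$ with $(p,q)\neq(1,n)$ then yields the asserted equality.

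The main (mild) obstacle is the centre computation; everything else is just unwinding. A subtle point worth flagging is that this route does not actually seem to need the hypothesis $i+1<j$---it yields the same conclusion for every basis vector $N_{ij}$. The hypothesis would however enter naturally in an alternative, more hands-on route in which $N_{ij}$ is first rewritten as $[N_{ik},N_{kj}]$ for some $i<k<j$, the Leibniz identity is applied to $L_{X^{\alpha}}([N_{ik},N_{kj}])$, and the resulting formula is matched term by term against the derivation formula for $R_{X^{\alpha}}([N_{ik},N_{kj}])$, using Lemma~\ref{lem2.4} to control which entries of $A^{\alpha}$ can survive. I would expect the authors to take this second, computational form of the argument, which explains the appearance of $i+1<j$ in the statement.
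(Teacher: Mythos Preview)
Your argument is correct and rests on the same basic identity the paper uses---that $[X^{\alpha},N_{ij}]+[N_{ij},X^{\alpha}]$ lies in the right annihilator of $R$---but you package it much more cleanly: you observe once that this forces $w$ into the centre $Z(T(n))=\mathbb{C}\,N_{1n}$ and read off all coordinates at once. The paper instead works this out element by element, bracketing $w$ against a succession of specific basis vectors $N_{12}$, $N_{1i}$, $N_{i(i+1)}$, etc., and then for $j>i+1$ rewrites $N_{ij}$ as an iterated commutator and applies the Leibniz identity inductively. Your observation that the hypothesis $i+1<j$ is not actually needed for the statement as written is also correct; the paper's proof uses that hypothesis because it in fact establishes more than the lemma asserts, deriving along the way the explicit form of $[X^{\alpha},N_{i(i+1)}]$ for all $i$ and showing that $b^{\alpha}_{ij,1n}=0$ when $j>i+1$, which is then used immediately afterwards. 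So your route is shorter and conceptually tidier for the stated lemma, while the paper's computational route simultaneously yields the finer information needed in the sequel.
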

\begin{proof} From Lemma \ref{lem2.4} we conclude

$$\begin{array}{lll}
[N_{12},X^\alpha] & = a^\alpha_{12,12}N_{12}+a^\alpha_{12,2n}N_{2n},&\\[1mm]
[N_{i(i+1)},X^\alpha] & = a^{\alpha}_{i(i+1),i(i+1)}N_{i(i+1)}+a^{\alpha}_{i(i+1),1n} N_{1n}, & 2\leq i\leq n-2,\\[1mm]
[N_{(n-1)n},X^\alpha] & = a^{\alpha}_{(n-1)n,(n-1)n}N_{(n-1)n}+a^{\alpha}_{(n-1)n,1(n-1)}N_{1(n-1)},& \\[1mm] [N_{ij},X^\alpha] & = \sum\limits_{p=i}^{j-1}a^{\alpha}_{p(p+1),p(p+1)}N_{ij}, & i+1<j.
\end{array}$$

It is easy to see that $[X^\alpha,N_{12}]+[N_{12},X^\alpha]$
belongs to the right annihilator of the algebra of $L(n,f).$ From
the chain of equalities
$$0=[N_{12},[X^\alpha,N_{12}]+[N_{12},X^\alpha]]=
[N_{12},\sum\limits_{i=3}^{n-1}b^{\alpha}_{12,2i}N_{2i}+(a^{\alpha}_{12,2n}+b^{\alpha}_{12,2n})N_{2n}]=$$
$$=\sum\limits_{i=3}^{n-1}b^{\alpha}_{12,2i}N_{1i}+(a^{\alpha}_{12,2n}+b^{\alpha}_{12,2n})N_{1n},$$
we deduce $b^{\alpha}_{12,2j}=0, \ 3\leq j\leq n-1$ and $b^{\alpha}_{12,2n}=-a^{\alpha}_{12,2n}.$

Similarly, from
$$0=[N_{1i},[X^\alpha,N_{12}]+[N_{12},X^\alpha]]=[N_{1i},\sum\limits_{j=i+1}^nb^\alpha_{ij}N_{ij}]
=\sum\limits_{j=i+1}^nb^\alpha_{ij}N_{1j}, \quad i > 2,$$ we
derive $b^{\alpha}_{12,ij}=0, \ 2< i<j\leq n.$

From the equality
$$0=[N_{i(i+1)},[X^\alpha,N_{12}]+[N_{12},X^\alpha]], \quad i\geq2,$$
we get $$b^{\alpha}_{12,12}=-a^{\alpha}_{12,12}, \quad b^{\alpha}_{12,1i}=0, \ 3\leq i\leq n-1.$$

Therefore, we obtain $$[X^\alpha,N_{12}]=-a^{\alpha}_{12,12}N_{12}-a^{\alpha}_{12,2n}N_{2n}+b^{\alpha}_{12,1n}N_{1n}.$$

Applying analogous argumentations as we used above for the
products with  $k\geq2,$
$$\begin{array}{lll}
[N_{1k},[X^\alpha,N_{i(i+1)}]+[N_{i(i+1)},X^\alpha]], & [N_{i(i+1)},[X^\alpha,N_{i(i+1)}]+[N_{i(i+1)},X^\alpha]], & 2\leq i\leq n-2,\\[1mm]
[N_{1k},[X^\alpha,N_{(n-1)n}]+[N_{(n-1)n},X^\alpha]], & [N_{i(i+1)},[X^\alpha,N_{(n-1)n}]+[N_{(n-1)n},X^\alpha]],&\\[1mm]
[N_{1k},[X^\alpha,N_{ij}]+[N_{ij},X^\alpha]], & [N_{i(i+1)},[X^\alpha,N_{ij}]+[N_{ij},X^\alpha]], & 1<j-i<n-1,\\[1mm]
[N_{1k},[X^\alpha,N_{1n}]+[N_{1n},X^\alpha]], & [N_{i(i+1)},[X^\alpha,N_{1n}]+[N_{1n},X^\alpha]],&
\end{array}$$
we obtain
$$\begin{array}{lll}
[X^\alpha,N_{i(i+1)}] =-a^{\alpha}_{i(i+1),i(i+1)}N_{i(i+1)}+b^{\alpha}_{i(i+1),1n} N_{1n}, & 2\leq i\leq n-2,\\[1mm]
[X^\alpha,N_{(n-1)n}]=-a^{\alpha}_{(n-1)n,(n-1)n}N_{(n-1)n}-a^{\alpha}_{(n-1)n,1(n-1)}N_{1(n-1)}+
b^{\alpha}_{(n-1)n,1n}N_{1n}, & \\[1mm]
[X^\alpha,N_{ij}] =-\sum\limits_{p=i}^{j-1}a^{\alpha}_{p(p+1),p(p+1)}N_{ij}+b^{\alpha}_{ij,1n}N_{1n}, & 1<j-i<n-1,\\[1mm]
[X^\alpha,N_{1n}] =b^{\alpha}_{1n,1n}N_{1n}.
\end{array}$$

From the chain of equalities
$$[X^\alpha,N_{1n}]=[X^\alpha,[N_{12},N_{2n}]]=[[X^\alpha,N_{12}],N_{2n}]-[[X^\alpha,N_{2n}],N_{12}]=$$
$$[-a^{\alpha}_{12,12}N_{12}-a^{\alpha}_{12,2n}N_{2n}+b^{\alpha}_{12,1n}N_{1n},N_{2n}]-
[-\sum\limits_{p=2}^{n-1}a^{\alpha}_{p(p+1),p(p+1)}N_{2n}+b^{\alpha}_{2n,1n}N_{1n},N_{12}]=$$
$$-a^{\alpha}_{12,12}N_{1n}-\sum\limits_{p=2}^{n-1}a^{\alpha}_{p(p+1),p(p+1)}N_{1n}=-\sum\limits_{p=1}^{n-1}a^{\alpha}_{p(p+1),p(p+1)}N_{1n},$$
we get
$[X^\alpha,N_{1n}]=-\sum\limits_{p=1}^{n-1}a^{\alpha}_{p(p+1),p(p+1)}N_{1n}.$

By induction on $j$ we will prove
\begin{equation}\label{eq2}
[X^\alpha,N_{i(i+j)}]=-\sum\limits_{p=i}^{i+j-1}a^{\alpha}_{p(p+1),p(p+1)}N_{i(i+j)}, \quad  j-i\geq 2.
\end{equation}

The base of induction ensures the equalities
$$[X^\alpha,N_{i(i+2)}]=[X^\alpha,[N_{i(i+1)},N_{(i+1)(i+2)}]]=[[X^\alpha,N_{i(i+1)}],N_{(i+1)(i+2)}]-$$
$$[[X^\alpha,N_{(i+1)(i+2)}],N_{i(i+1)}]=-\sum\limits_{p=i}^{i+1}a^{\alpha}_{p(p+1),p(p+1)}N_{i(i+2)}, \quad  1\leq i\leq n-2.$$

Let us suppose that (\ref{eq2}) holds for $j$ and we will show it
for $j+1.$

For $i+j+1\leq n-1$ we have

$$[X^\alpha,N_{i(i+j+1)}]=[X^\alpha,[N_{i(i+j)},N_{(i+j)(i+j+1)}]]=[[X^\alpha,N_{i(i+j)}],N_{(i+j)(i+j+1)}]-$$
$$[[X^\alpha,N_{(i+j)(i+j+1)}],N_{i(i+j)}]=[-\sum\limits_{p=i}^{i+j-1}a^{\alpha}_{p(p+1),p(p+1)}N_{i(i+j)},
N_{(i+j)(i+j+1)}]-$$
$$[-a^{\alpha}_{(i+j)(i+j+1),(i+j)(i+j+1)}N_{(i+j)(i+j+1)}+b^{\alpha}_{(i+j)(i+j+1),1n} N_{1n},N_{i(i+j)}]=$$
$$-\sum\limits_{p=i}^{i+j}a^{\alpha}_{p(p+1),p(p+1)}N_{i(i+j+1)}.$$

The following chain of equalities complete the proof of equality (\ref{eq2})
$$[X^\alpha,N_{in}]=[X^\alpha,[N_{i(n-1)},N_{(n-1)n}]]=
[X^\alpha,N_{i(n-1)}],N_{(n-1)n}]-[X^\alpha,N_{(n-1)n}],N_{i(n-1)}]=$$
$$[-\sum\limits_{p=i}^{n-2}a^{\alpha}_{p(p+1),p(p+1)}N_{i(n-1)},N_{(n-1)n}]-[-a^{\alpha}_{(n-1)n,(n-1)n}N_{(n-1)n}-
a^{\alpha}_{(n-1)n,1(n-1)}N_{1(n-1)}+$$
$$b^{\alpha}_{(n-1)n,1n}N_{1n},N_{i(n-1)}]=-\sum\limits_{p=i}^{n-1}a^{\alpha}_{p(p+1),p(p+1)}N_{in}.$$

Therefore, we obtain
$$[X^\alpha,N_{12}]=-a^{\alpha}_{12,12}N_{12}-a^{\alpha}_{12,2n}N_{2n}+b^{\alpha}_{12,1n}N_{1n}.$$
$$[X^\alpha,N_{i(i+1)}]=-a^{\alpha}_{i(i+1),i(i+1)}N_{i(i+1)}+b^{\alpha}_{i(i+1),1n} N_{1n}, \ 2\leq i\leq n-2,$$
$$[X^\alpha,N_{(n-1)n}]=-a^{\alpha}_{(n-1)n,(n-1)n}N_{(n-1)n}-a^{\alpha}_{(n-1)n,1(n-1)}N_{1(n-1)}+b^{\alpha}_{(n-1)n,1n}N_{1n},$$
$$[X^\alpha,N_{ij}]=-\sum\limits_{p=i}^{j-1}a^{\alpha}_{p(p+1),p(p+1)}N_{ij}, \ j>i+1.$$

Comparison of the above products with notations in (\ref{eq1})
completes the proof of lemma.
\end{proof}

\begin{lem} \label{lem3.2}  For $1\leq \alpha, \beta \leq n$ we have $[X^\alpha,X^\beta]=\sigma^{\alpha\beta}N_{1n}$ for some $\sigma^{\alpha\beta}\in \mathbb{C}.$
\end{lem}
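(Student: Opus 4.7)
\medskip
\noindent\textbf{Proof sketch.} Since $R$ is solvable, the derived subalgebra $[R,R]$ is nilpotent and hence lies inside the nilradical $T(n)$. In particular,
$$[X^\alpha,X^\beta]=\sum\limits_{1\leq p<q\leq n}\sigma^{\alpha\beta}_{pq}N_{pq}$$
for some scalars $\sigma^{\alpha\beta}_{pq}$, and the task is to show that $\sigma^{\alpha\beta}_{pq}=0$ whenever $(p,q)\neq(1,n)$.

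The plan is to exploit the Leibniz identity in the form
$$[N_{ij},[X^\alpha,X^\beta]]=[[N_{ij},X^\alpha],X^\beta]-[[N_{ij},X^\beta],X^\alpha],$$
for carefully chosen basis elements $N_{ij}$ of $T(n)$. By Lemma \ref{lem2.4}, for every $N_{ij}$ the bracket $[N_{ij},X^\gamma]$ is a linear combination of at most two basis vectors: $N_{ij}$ itself together with exactly one ``exceptional'' off-diagonal vector ($N_{2n}$ for $i,j=1,2$; $N_{1n}$ for the middle $N_{i(i+1)}$; $N_{1(n-1)}$ for $N_{(n-1)n}$; no off-diagonal term for non-adjacent $N_{ij}$). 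Applying $X^\beta$ again keeps us in that same small subspace, and after antisymmetrizing in $\alpha\leftrightarrow\beta$ the diagonal term (which is symmetric) cancels. Thus the right-hand side is confined to the span of a single ``exceptional'' basis vector, whereas the left-hand side is a linear combination of an entirely different family of basis vectors coming from the bracket $[N_{ij},N_{pq}]=\delta_{jp}N_{iq}-\delta_{iq}N_{pj}$ in $T(n)$. Comparing coefficients in the disjoint spans forces the relevant $\sigma^{\alpha\beta}_{pq}$ to vanish.

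Concretely, I would carry out three rounds. First, for each $2\leq i\leq n-2$, take $N_{ij}=N_{i(i+1)}$: the right-hand side lies in $\mathbb{C}N_{1n}$, while the left-hand side is $\sum_{q>i+1}\sigma^{\alpha\beta}_{(i+1)q}N_{iq}-\sum_{p<i}\sigma^{\alpha\beta}_{pi}N_{p(i+1)}$, none of whose basis vectors equals $N_{1n}$ (since $i\geq 2$ and $i+1\leq n-1$). This kills every $\sigma^{\alpha\beta}_{pq}$ with $p\geq 3$ or $q\leq n-2$. Second, taking $N_{ij}=N_{12}$: the right-hand side lies in $\mathbb{C}N_{2n}$, while the left-hand side is $\sum_{q>2}\sigma^{\alpha\beta}_{2q}N_{1q}$, killing $\sigma^{\alpha\beta}_{2q}$ for every $q>2$, in particular $\sigma^{\alpha\beta}_{2,n-1}$ and $\sigma^{\alpha\beta}_{2n}$. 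Third, taking $N_{ij}=N_{(n-1)n}$: the right-hand side lies in $\mathbb{C}N_{1(n-1)}$, while the left-hand side equals $-\sum_{p<n-1}\sigma^{\alpha\beta}_{p,n-1}N_{pn}$, killing $\sigma^{\alpha\beta}_{p,n-1}$ for every $p<n-1$, in particular $\sigma^{\alpha\beta}_{1,n-1}$. The only coefficient left untouched is $\sigma^{\alpha\beta}_{1n}$, giving the claim with $\sigma^{\alpha\beta}:=\sigma^{\alpha\beta}_{1n}$.

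I do not expect any serious obstacle: the main challenge is purely combinatorial bookkeeping, namely checking that the union of the three index sets exhausts all pairs $(p,q)\neq(1,n)$ and verifying that the small case $n=3$ (where the first round is vacuous) is already handled by the two boundary choices $N_{12}$ and $N_{(n-1)n}=N_{23}$.
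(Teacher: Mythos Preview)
Your proposal is correct and follows essentially the same route as the paper: both arguments apply the Leibniz identity $[N_{i(i+1)},[X^\alpha,X^\beta]]=[[N_{i(i+1)},X^\alpha],X^\beta]-[[N_{i(i+1)},X^\beta],X^\alpha]$ for the three families $N_{12}$, $N_{i(i+1)}$ ($2\le i\le n-2$), and $N_{(n-1)n}$, and compare the resulting disjoint spans to force all $\sigma^{\alpha\beta}_{pq}$ with $(p,q)\neq(1,n)$ to vanish. The only cosmetic differences are the order of the three rounds and that the paper writes out the right-hand sides explicitly rather than invoking your ``disjoint spans'' observation; your bookkeeping of which coefficients die in each round matches the paper's conclusions exactly.
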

\begin{proof}

Consider
$$[N_{12},[X^\alpha,X^\beta]]=[[N_{12},X^\alpha],X^\beta]-[[N_{12},X^\beta],X^\alpha]=
[a^\alpha_{12,12}N_{12}+a^\alpha_{12,2n}N_{2n},X^\beta]-$$
$$[a^\beta_{12,12}N_{12}+a^\beta_{12,2n}N_{2n},X^\alpha]=
a^\alpha_{12,12}(a^\beta_{12,12}N_{12}+a^\beta_{12,2n}N_{2n})+
a^\alpha_{12,2n}(\sum\limits_{p=2}^{n-1}a^{\beta}_{p(p+1),p(p+1)}N_{2n})-$$
$$a^\beta_{12,12}(a^\alpha_{12,12}N_{12}+a^\alpha_{12,2n}N_{2n})-
a^\beta_{12,2n}(\sum\limits_{p=2}^{n-1}a^{\alpha}_{p(p+1),p(p+1)}N_{2n})=
(a^{\alpha}_{12,12}a^{\beta}_{12,2n}-a^{\beta}_{12,12}a^{\alpha}_{12,2n}-$$
$$\sum\limits_{p=2}^{n-1}a^{\alpha}_{p(p+1),p(p+1)}a^\beta_{12,2n}+\sum\limits_{p=2}^{n-1}a^{\beta}_{p(p+1),p(p+1)}a^\alpha_{12,2n})N_{2n}.$$

On the other hand,
$$[N_{12},[X^\alpha,X^\beta]]=[N_{12},\sum\limits_{1\leq
q-p<n}\sigma^{\alpha\beta}_{pq}
N_{pq}]=\sum\limits_{i=3}^n\sigma^{\alpha\beta}_{2i}N_{1i}.$$

Comparing coefficients at the basis elements we derive
$$\sigma^{\alpha\beta}_{2i}=0, \quad 3\leq i\leq n.$$

%$a^{\alpha}_{12,12}a^{\beta}_{12,2n}-a^{\beta}_{12,12}a^{\alpha}_{12,2n}-\sum\limits_{p=2}^{n-1}a^{\alpha}_{p(p+1),p(p+1)}a^\beta_{12,2n}+\sum\limits_{p=2}^{n-1}a^{\beta}_{p(p+1),p(p+1)}a^\alpha_{12,2n}=0.$

For $2\leq i\leq n-2$ we consider the chain of equalities
$$[N_{i(i+1)},[X^\alpha,X^\beta]]=[[N_{i(i+1)},X^\alpha],X^\beta]-[[N_{i(i+1)},X^\beta],X^\alpha]=$$
$$a^{\alpha}_{i(i+1),i(i+1)}(a^{\beta}_{i(i+1),i(i+1)}N_{i(i+1)}+a^{\beta}_{i(i+1),1n} N_{1n})+a^{\alpha}_{i(i+1),1n}\sum\limits_{p=1}^{n-1}a^{\beta}_{p(p+1),p(p+1)}N_{1n}-$$
$$a^{\beta}_{i(i+1),i(i+1)}(a^{\alpha}_{i(i+1),i(i+1)}N_{i(i+1)}+a^{\alpha}_{i(i+1),1n} N_{1n})-a^{\beta}_{i(i+1),1n}\sum\limits_{p=1}^{n-1}a^{\alpha}_{p(p+1),p(p+1)}N_{1n}=$$
$$(a^{\alpha}_{i(i+1),i(i+1)}a^{\beta}_{i(i+1),1n}+
a^{\alpha}_{i(i+1),1n}\sum\limits_{p=1}^{n-1}a^{\beta}_{p(p+1),p(p+1)}-$$
$$a^{\beta}_{i(i+1),i(i+1)}a^{\alpha}_{i(i+1),1n}-a^{\beta}_{i(i+1),1n}\sum\limits_{p=1}^{n-1}a^{\alpha}_{p(p+1),p(p+1)})N_{1n}.$$

On the other hand,
$$[N_{i(i+1)},[X^\alpha,X^\beta]]=
[N_{i(i+1)},\sum\limits_{k=1}^{i-1}\sigma^{\alpha\beta}_{ki}N_{ki}+
\sum\limits_{j=i+2}^n\sigma^{\alpha\beta}_{(i+1)j}N_{(i+1)j}]=-\sum\limits_{k=1}^{i-1}\sigma^{\alpha\beta}_{ki}N_{k(i+1)}+\sum\limits_{j=i+2}^n\sigma^{\alpha\beta}_{(i+1)j}N_{ij}.$$
Therefore,
$$\sigma^{\alpha\beta}_{ki}=\sigma^{\alpha\beta}_{js}=0, \ 1\leq k\leq i-1, \ 2\leq i\leq n-2, \ 3\leq j\leq n-1, \ j+1\leq s\leq n$$ and  $$[X^\alpha,X^\beta]=\sigma^{\alpha\beta}_{1(n-1)}N_{1(n-1)}+\sigma^{\alpha\beta}_{1n}N_{1n}.$$

Similar arguments for the products
$$[N_{(n-1)n},[X^\alpha,X^\beta]]$$
yield $\sigma^{\alpha\beta}_{1(n-1)}=0,$ which completes the proof
of the lemma. For convenience let us omit the lower indexes of
$\sigma^{\alpha\beta}_{1n}$.
\end{proof}

From Leibniz identity
$$[X^\alpha,[N_{i(i+1)},X^\alpha]]=[[X^\alpha,N_{i(i+1)}],X^\alpha]-[[X^\alpha,X^\alpha],N_{i(i+1)}]$$
for $1\leq i\leq n-1$ we obtain restrictions:
$$ a^{\alpha}_{i(i+1),i(i+1)}(a^{\alpha}_{i(i+1),1n}+b^{\alpha}_{i(i+1),1n})=0, \ 2\leq i\leq n-2,$$ $$a^{\alpha}_{12,12}b^{\alpha}_{12,1n}=a^{\alpha}_{(n-1)n,(n-1)n}b^{\alpha}_{(n-1)n,1n}=0.$$

Let us list again the obtained products between the basis
elements.  For $1\leq \alpha\leq f$ we have
$$\left\{\begin{array}{lll}
[N_{12},X^\alpha]=a^\alpha_{12,12}N_{12}+a^\alpha_{12,2n}N_{2n},&\\[1mm]
[N_{i(i+1)},X^\alpha]=a^{\alpha}_{i(i+1),i(i+1)}N_{i(i+1)}+a^{\alpha}_{i(i+1),1n} N_{1n}, & 2\leq i\leq n-2,\\[1mm]
[N_{(n-1)n},X^\alpha]=a^{\alpha}_{(n-1)n,(n-1)n}N_{(n-1)n}+a^{\alpha}_{(n-1)n,1(n-1)}N_{1(n-1)},&\\[1mm]
[N_{ij},X^\alpha]=\sum\limits_{p=i}^{j-1}a^{\alpha}_{p(p+1),p(p+1)}N_{ij}, & j>i+1,\\[1mm]
[X^\alpha,N_{12}]=-a^{\alpha}_{12,12}N_{12}-a^{\alpha}_{12,2n}N_{2n}+b^{\alpha}_{12,1n}N_{1n}, &\\[1mm]
[X^\alpha,N_{i(i+1)}]=-a^{\alpha}_{i(i+1),i(i+1)}N_{i(i+1)}+b^{\alpha}_{i(i+1),1n} N_{1n}, & 2\leq i\leq n-2,\\[1mm]
[X^\alpha,N_{(n-1)n}]=-a^{\alpha}_{(n-1)n,(n-1)n}N_{(n-1)n}-a^{\alpha}_{(n-1)n,1(n-1)}N_{1(n-1)}+
b^{\alpha}_{(n-1)n,1n}N_{1n},&\\[1mm]
[X^\alpha,N_{ij}]=-\sum\limits_{p=i}^{j-1}a^{\alpha}_{p(p+1),p(p+1)}N_{ij}, & j>i+1,\\[1mm]
[X^\alpha,X^\beta]=\sigma^{\alpha\beta}N_{1n},&
\end{array}\right.
$$
with restrictions on parameters:
$$a^{\alpha}_{i(i+1),i(i+1)}(a^{\alpha}_{i(i+1),1n}+b^{\alpha}_{i(i+1),1n})=0, \ 2\leq i\leq n-2,$$ $$a^{\alpha}_{12,12}b^{\alpha}_{12,1n}=a^{\alpha}_{(n-1)n,(n-1)n}b^{\alpha}_{(n-1)n,1n}=0.$$

Note that for solvable non-Lie Leibniz algebras of the set
$L(n,f)$ the following equality holds
\begin{equation}\label{eq3}
[X^\gamma,N_{1n}]=[N_{1n},X^\gamma]=0, \quad 1\leq \gamma\leq f.
\end{equation}

Indeed, if we assume the contrary, then taking into account that
$[X^\gamma,N_{1n}]=-[N_{1n},X^\gamma]$ we can assume
$[X^{\gamma},N_{1n}]\neq0$ for some $\gamma\in\{1, \dots, f\}.$

Simplifying the following products using Leibniz identity
$$[X^{\gamma},[N_{12},X^\alpha]+[X^\alpha,N_{12}]], \quad [X^{\gamma},[N_{i(i+1)},X^\alpha]+[X^\alpha,N_{i(i+1)}]],$$
$$[X^{\gamma},[N_{(n-1)n},X^\alpha]+[X^\alpha,N_{(n-1)n}]], \quad [X^{\gamma},[X^\alpha,X^\beta]+[X^\beta,X^\alpha]], \quad [X^{\gamma},[X^\alpha,X^\alpha]],$$
we obtain
$$b^{\alpha}_{12,1n}=b^{\alpha}_{(n-1)n,1n}=\sigma^{\alpha\alpha}=0, \quad b^{\alpha}_{i(i+1),1n}=-a^{\alpha}_{i(i+1),1n}, \quad
\sigma^{\alpha\beta}=-\sigma^{\beta\alpha}.$$ Thus we get a Lie
algebra, which is a contradiction.

%Thus we proved the next theorem
%\begin{thm} \label{t3} Solvable Leibniz algebras dimension of $\frac{1}{2}n(n-1)+f, \ (1\leq f\leq n-2)$ with %nilradical $T(n)$ has the following multiplication:
%$$[N_{12},X^\alpha]=a^\alpha_{12,12}N_{12}+a^\alpha_{12,2n}N_{2n},$$
%$$[N_{i(i+1)},X^\alpha]=a^{\alpha}_{i(i+1),i(i+1)}N_{i(i+1)}+a^{\alpha}_{i(i+1),1n} N_{1n}, \ 2\leq i\leq n-2,$$
%$$[N_{(n-1)n},X^\alpha]=a^{\alpha}_{(n-1)n,(n-1)n}N_{(n-1)n}+a^{\alpha}_{(n-1)n,1(n-1)}N_{1(n-1)},$$
%$$[N_{ij},X^\alpha]=\sum\limits_{p=i}^{j-1}a^{\alpha}_{p(p+1),p(p+1)}N_{ij}, \ j>i+1,$$
%$$[X^\alpha,N_{12}]=-a^{\alpha}_{12,12}N_{12}-a^{\alpha}_{12,2n}N_{2n}+b^{\alpha}_{12,1n}N_{1n}.$$
%$$[X^\alpha,N_{i(i+1)}]=-a^{\alpha}_{i(i+1),i(i+1)}N_{i(i+1)}+b^{\alpha}_{i(i+1),1n} N_{1n}, \ 2\leq i\leq n-2,$$
%$$[X^\alpha,N_{(n-1)n}]=-a^{\alpha}_{(n-1)n,(n-1)n}N_{(n-1)n}-a^{\alpha}_{(n-1)n,1(n-1)}N_{1(n-1)}+b^{\alpha}_{(n-1)n,1n}N_{1n},$$
%$$[X^\alpha,N_{ij}]=-\sum\limits_{p=i}^{j-1}a^{\alpha}_{p(p+1),p(p+1)}N_{ij}, \ j>i+1,$$
%$$[X^\alpha,X^\beta]=\sigma^{\alpha\beta}N_{1n}.$$
%with the restrictions
%$$a^{\alpha}_{12,12}b^{\alpha}_{12,1n}=0,$$
%$$a^{\alpha}_{i(i+1),i(i+1)}(a^{\alpha}_{i(i+1),1n}+b^{\alpha}_{i(i+1),1n})=0, \ 2\leq i\leq n-2,$$
%$$a^{\alpha}_{(n-1)n,(n-1)n}b^{\alpha}_{(n-1)n,1n}=0.$$
%\end{thm}

\begin{cor} For a Leibniz algebra of the set $L(n,1)$ the matrices of the left and right operators $A=(a_{ij,pq}), \ B=(b_{ij,pq})$ have the following properties:

1) The maximum number of off-diagonal elements of matrix $A$ is $n-1;$

2) The maximum number of off-diagonal elements of matrix $B$ is $n+1.$
\end{cor}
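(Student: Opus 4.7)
The plan is to read both bounds directly off the explicit table of products that has been assembled just before the corollary from Lemmas~\ref{lem3.1} and~\ref{lem3.2}. Since $f=1$ there is a single element $X=X^{1}$ spanning the complement of $T(n)$, and the matrices $A$ and $B$ are determined by the rows $R_{X}(N_{ij})=[N_{ij},X]$ and $L_{X}(N_{ij})=[X,N_{ij}]$ respectively. By Lemma~\ref{lem2.4}(i) both are upper triangular in the chosen ordering of the basis, so it will be enough to count, row by row, the off-diagonal entries that may appear.

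For the matrix $A$, inspection of the formulas for $[N_{ij},X]$ in the table shows that the only rows which contribute an off-diagonal entry are: the row $N_{12}$, contributing $a_{12,2n}$; each of the rows $N_{i(i+1)}$ for $2\leq i\leq n-2$, contributing the single entry $a_{i(i+1),1n}$; and the row $N_{(n-1)n}$, contributing $a_{(n-1)n,1(n-1)}$. The remaining rows $N_{ij}$ with $j>i+1$ are purely diagonal. This immediately yields $1+(n-3)+1=n-1$ off-diagonal entries, giving~(1). For the matrix $B$ one performs the same row-by-row examination of $[X,N_{ij}]$: the row $[X,N_{12}]$ contributes the two off-diagonals $-a_{12,2n}$ and $b_{12,1n}$; each row $[X,N_{i(i+1)}]$ with $2\leq i\leq n-2$ contributes the single off-diagonal $b_{i(i+1),1n}$; the row $[X,N_{(n-1)n}]$ contributes the two off-diagonals $-a_{(n-1)n,1(n-1)}$ and $b_{(n-1)n,1n}$; the rows indexed by $N_{ij}$ with $j>i+1$ contribute none. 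Summing gives $2+(n-3)+2=n+1$, which is~(2).

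The only step that is not a pure transcription of the table, and which I expect to be the main (though still minor) obstacle, is checking that these counts are actually attained, i.e.\ that one can choose the parameters so that all the listed entries are simultaneously nonzero, consistent with the constraints
$$a_{i(i+1),i(i+1)}(a_{i(i+1),1n}+b_{i(i+1),1n})=0,\qquad a_{12,12}\,b_{12,1n}=a_{(n-1)n,(n-1)n}\,b_{(n-1)n,1n}=0.$$
For $A$ this is automatic, since the off-diagonal entries $a_{12,2n}$, $a_{i(i+1),1n}$, $a_{(n-1)n,1(n-1)}$ do not enter any of these relations. For $B$ it suffices to set $a_{12,12}=a_{(n-1)n,(n-1)n}=0$ and $a_{i(i+1),1n}=-b_{i(i+1),1n}\neq 0$ for $2\leq i\leq n-2$, leaving a suitable interior diagonal $a_{i(i+1),i(i+1)}$ free to be nonzero so that $X$ remains non-nilpotent. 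With this verification in hand the bounds $n-1$ and $n+1$ are sharp, and the corollary follows.
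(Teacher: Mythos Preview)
Your argument is correct and matches the paper's intent: the corollary is stated there without proof, as an immediate consequence of the explicit product table assembled from Lemmas~\ref{lem2.4}, \ref{lem3.1}, and~\ref{lem3.2}, and your row-by-row count is exactly the right way to read it off. One small remark: Lemma~\ref{lem2.4}(i) literally concerns only $A$, so the upper-triangularity of $B$ is better justified directly from the displayed formulas for $[X,N_{ij}]$ (as you in fact do when counting); also, your sharpness verification for the bound $n+1$ goes beyond what the paper records, and it is a welcome addition.
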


\begin{thm} \label{thm3.4} Solvable Leibniz algebra of the set $L(n, n-1)$ is a Lie algebra.
\end{thm}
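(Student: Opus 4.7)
The plan is to derive a contradiction by combining equation~(\ref{eq3}), already established in the paragraph preceding the theorem for every non-Lie algebra in $L(n,f)$, with the nil-independence forced by the choice $f = n-1 = f_{\max}$.

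Suppose for contradiction that $R \in L(n,n-1)$ is not a Lie algebra. Then (\ref{eq3}) applied to each $\alpha \in \{1,\dots,n-1\}$ gives $[N_{1n}, X^\alpha] = 0$. Specializing the formula $[N_{ij}, X^\alpha] = \sum_{p=i}^{j-1} a^\alpha_{p(p+1),p(p+1)} N_{ij}$ (valid for $j > i+1$, as recorded in the multiplication table appearing just before (\ref{eq3})) to $(i,j) = (1,n)$ gives $[N_{1n}, X^\alpha] = \bigl(\sum_{p=1}^{n-1} a^\alpha_{p(p+1),p(p+1)}\bigr) N_{1n}$, so
$$\sum_{p=1}^{n-1} a^\alpha_{p(p+1),p(p+1)} = 0, \quad \alpha = 1,\dots,n-1. \qquad (\ast)$$

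The next step is to convert $(\ast)$ into a dimensional contradiction with nil-independence. Associate to each $X^\alpha$ the diagonal vector $v^\alpha := \bigl(a^\alpha_{12,12},\, a^\alpha_{23,23},\, \dots,\, a^\alpha_{(n-1)n,(n-1)n}\bigr) \in \mathbb{C}^{n-1}$. By Lemma~\ref{lem2.4}, $R_{X^\alpha}$ is upper triangular on $T(n)$ with diagonal entry $\sum_{p=i}^{j-1} a^\alpha_{p(p+1),p(p+1)}$ on $N_{ij}$, so any linear combination $\sum_\alpha \beta_\alpha R_{X^\alpha}$ is nilpotent if and only if $\sum_\alpha \beta_\alpha v^\alpha = 0$ in $\mathbb{C}^{n-1}$. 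Since the complementary dimension $f = n-1$ equals $f_{\max}$, the operators $R_{X^1}, \dots, R_{X^{n-1}}$ are nil-independent derivations of $T(n)$, which is equivalent to linear independence of $v^1, \dots, v^{n-1}$ in $\mathbb{C}^{n-1}$.

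However, condition $(\ast)$ places every $v^\alpha$ in the $(n-2)$-dimensional hyperplane $H = \{x \in \mathbb{C}^{n-1} : x_1 + \cdots + x_{n-1} = 0\}$, so the $n-1$ vectors $v^1,\dots,v^{n-1}$ cannot be linearly independent. This is the desired contradiction, so $R$ must be a Lie algebra. The whole argument is essentially a pigeon-hole dimension count; once equation~(\ref{eq3}) is in hand there is no real obstacle, the only point needing care being the equivalence between nil-independence of the $X^\alpha$ and linear independence of their diagonal vectors, which follows immediately from upper triangularity together with Lemma~\ref{lem2.4}(iii).
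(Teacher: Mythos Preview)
Your proof is correct and rests on the same two ingredients as the paper's: equation~(\ref{eq3}) and the nil-independence of the $R_{X^\alpha}$ forced by $f = f_{\max} = n-1$, together with the upper-triangular description of $R_{X^\alpha}$ from Lemma~\ref{lem2.4}. The paper packages the argument slightly differently: instead of a dimension count, it first performs a linear change among the $X^\alpha$ so that the diagonal vector of $R_{X^1}$ becomes $(1,0,\dots,0)$ (this is possible precisely because the $v^\alpha$ span $\mathbb{C}^{n-1}$, which is exactly your linear-independence claim), and then reads off $[N_{1n},X^1] = N_{1n} \neq 0$, directly contradicting~(\ref{eq3}). Your pigeon-hole formulation avoids the change of basis and makes the tightness of $f = n-1$ more transparent, while the paper's version is more concrete in exhibiting the specific element $X^1$ that violates~(\ref{eq3}); the underlying linear algebra is identical.
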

\begin{proof} Making suitable change of basis we can assume that operator
$R_{X^1}$ acts as follows
$$
\begin{array}{lll}
[N_{12},X^1]=N_{12}+a^1_{12,2n}N_{2n}, & & \\[1mm]
[N_{i(i+1)},X^1]=a^1_{i(i+1),1n} N_{1n}, & 2\leq i\leq n-2, & \\[1mm]
[N_{(n-1)n},X^1]=a^1_{(n-1)n,1(n-1)}N_{1(n-1)}, & & \\[1mm]
[N_{1j},X^1]=N_{1j}, & j>2. &
\end{array}
$$

Since $[N_{1n},X^1]=N_{1n},$ then from Equation (\ref{eq3}) it
follows that the algebra is a Lie algebra.
\end{proof}

So we present a description of solvable Leibniz algebras with
nilradical  $T(n)$. Moreover, in the case of maximal possible
dimension we show that this algebra is a Lie algebra.

\section{Illustration for low dimensions}

In this section we give the description of Leibniz algebras with nilradical $T(3)$ and $T(4)$.

Note that Lie algebra $T(3)$ is nothing else, but Heisenberg algebra $H(1)$. Solvable Leibniz algebras with Heisenberg nilradical were described in \cite{Lin}.

Therefore, we will consider case when $n=4.$ We know that the
complimentary vector space to nilradical $T(4)$ has dimension less
than four. In case when dimension of the complementary space is
equal to 3 we obtain a Lie algebra (see Theorem \ref{thm3.4}),
which falls into the classification already obtained in
\cite{Wint}. So we will consider dimension of the complimentary
vector space to be equal to 1 and 2.

\

\textbf{The Leinbiz algebras $L(4,1).$}

From previous section we have that the algebra $L(4,1)$ admits a
basis $\{N_{12}, N_{23}, N_{34}, N_{13},N_{24}, N_{14}, X\}$ in
which the table of multiplication has the following form:
\begin{equation}\label{eq4}
\left\{\begin{array}{llll}
[N_{12},X]&=&a_{12,12}N_{12}+a_{12,24} N_{24},&\\[1mm]
[X,N_{12}]&=&-a_{12,12}N_{12}-a_{12,24} N_{24}+b_{12,14}N_{14},&\\[1mm]
[N_{23},X]&=&a_{23,23}N_{23}+a_{23,14} N_{14},&\\[1mm]
[X,N_{23}]&=&-a_{23,23}N_{23}+b_{23,14}N_{14},&\\[1mm]
[N_{34},X]&=&-(a_{12,12}+a_{23,23})N_{34}+a_{34,13} N_{13},&\\[1mm] [X,N_{34}]&=&(a_{12,12}+a_{23,23})N_{34}-a_{34,13} N_{13}+b_{34,14}N_{14}, &\\[1mm] [N_{13},X]&=&-[X,N_{13}]=(a_{12,12}+a_{23,23})N_{13},&\\[1mm]
[N_{24},X]&=&-[X,N_{24}]=-a_{12,12}N_{24},&\\[1mm]
[X,X]&=&\sigma_{14}N_{14},&\\[1mm]
\end{array}\right.
\end{equation}
where
$$a_{12,12}b_{12,14}=a_{23,23}(a_{23,14}+b_{23,14})=(a_{12,12}+a_{23,23})b_{34,14}=0.$$

Since $L(4,1)$ is a non-nilpotent Leibniz algebra we have
$(a_{12,12},a_{23,23})\neq(0,0).$

{\bf Case 1.} Let $a_{12,12}=0$. Then $a_{23,23}\neq0, b_{23,14}=-a_{23,14}$ and $b_{34,14}=0$.

Taking the change of basis as follows:
$$X'=\frac{1}{a_{23,23}}X, \quad N_{23}'=N_{23}+\frac{a_{23,14}}{a_{23,23}}N_{14}, \quad N_{34}'=N_{34}-\frac{a_{34,13}}{2a_{23,23}}N_{13}$$ the multiplication (\ref{eq4}) transforms into
$$
\begin{array}{lll}
[N_{12},X]=a_{12,24} N_{24}, & [X,N_{12}]=-a_{12,24} N_{24}+b_{12,14}N_{14}, & \\[1mm]
[N_{23},X]=-[X,N_{23}]=N_{23}, & [N_{34},X]=-[X,N_{34}]=-N_{34}, & \\[1mm]
[N_{13},X]=-[X,N_{13}]=N_{13}, &  [X,X]=\sigma_{14}N_{14},
\end{array}
$$
where $(b_{12,14},\sigma_{14}) \neq (0, 0).$

%Then $A_1^{[2]}=<N_{23},N_{34},N_{13},N_{24},N_{14}>, \ A_1^{[3]}=<N_{24},N_{14}>.$

{\bf Case 2.} Let $a_{12,12}\neq0,$ then $b_{12,14}=0.$ Taking the
change of basis $X'=\frac{1}{a_{12,12}}X,$  we can assume
$a_{12,12}=1.$

{\bf Subcase 2.1.} Let $a_{23,23}= 0.$ Then  $b_{34,14}=0.$

Applying a change of a basis
$$N_{12}'=N_{12}+\frac{a_{12,24}}{2}N_{24}, \quad
N_{34}'=N_{34}-\frac{a_{34,13}}{2}N_{13}$$ the products
(\ref{eq4})  simplify to the following:
$$\begin{array}{lll}
[N_{12},X]=-[X,N_{12}]=N_{12}, & [N_{34},X]=-[X,N_{34}]=-N_{34}, &  \\[1mm]
[N_{13},X]=-[X,N_{13}]=N_{13}, & [N_{24},X]=-[X,N_{24}]=-N_{24}, &\\[1mm]
[N_{23},X]=a_{23,14} N_{14}, & [X,N_{23}]=b_{23,14}N_{14}, &   \\[1mm]
[X,X]=\sigma_{14}N_{14}, &&
\end{array}$$
where $(a_{23,14}+b_{23,14}, \sigma_{14} ) \neq (0, 0).$
%Then $A_2^{[2]}=<N_{12},N_{34},N_{13},N_{24},N_{14}>, \ A_2^{[3]}=<N_{14}>.$

{\bf Subcase 2.2.} Let $a_{23,23}\neq0.$ Then $b_{23,14}=-a_{23,14}.$

{\bf Subcase 2.2.1.} Let $a_{23,23}=-1.$ Then substituting
$$N_{23}'=N_{23}-a_{23,14}N_{14}, \quad N_{12}'=N_{12}+\frac{a_{12,24}}{2}N_{24}$$
we derive to an algebra with the following table of
multiplication:
$$\begin{array}{lll}
[N_{12},X]=-[X,N_{12}]=N_{12}, & [N_{23},X]=[X,N_{23}]=-N_{23}, & \\[1mm]
[N_{34},X]=a_{34,13} N_{13}, & [X,N_{34}]=-a_{34,13} N_{13}+b_{34,14}N_{14}, \\[1mm]
[N_{24},X]=-[X,N_{24}]=-N_{24}, & [X,X]=\sigma_{14}N_{14}
\end{array}$$
where $(b_{12,14}, \sigma_{14}) \neq(0, 0).$

Note that by permuting the indexes of the basis elements of the
above algebra one obtains an algebra from Case 1.

%Then $A_3^{[2]}=<N_{12},N_{23},N_{13},N_{24},N_{14}>, \ A_3^{[3]}=<N_{13}, N_{14}>.$

{\bf Subcase 2.2.2.} Let $a_{23,23}\neq-1.$ Then $b_{34,14}=0.$

Setting
$$N_{12}'=N_{12}+\frac{a_{12,24}}{2}N_{24}, \quad N_{23}'=N_{23}+\frac{a_{23,14}}{a_{23,23}}N_{14},$$
$$ N_{34}'=\sigma_{14}(N_{34}-\frac{a_{34,13}}{2(1+a_{23,23})}N_{13}), \quad N_{24}'=\sigma_{14}N_{24}, \quad N_{14}'=\sigma_{14}N_{14}$$
we get an algebra with the following table of multiplications:
$$
\begin{array}{lll}
[N_{12},X]=-[X,N_{12}]=N_{12}, & [N_{23},X]=-[X,N_{23}]=a_{23,23}N_{23}, \\[1mm]
[N_{34},X]=-[X,N_{34}]=-(1+a_{23,23})N_{34}, & [N_{13},X]=-[X,N_{13}]=(1+a_{23,23})N_{13}, \\[1mm]  [N_{24},X]=-[X,N_{24}]=-N_{24}, & [X,X]=N_{14},
\end{array}
$$
where $(1+a_{23,23})a_{23,23}\neq0.$
%Then $A_4^{[2]}=<N_{12},N_{23},N_{34},N_{13},N_{24},N_{14}>.$

Non-isomorphisms of obtained algebras can be easily established by considering the dimensions of derived series of the algebras.

Thus, the following theorem is proved.

\begin{thm} An arbitrary non-Lie Leibniz algebra of the set $L(4,1)$ is isomorphic to one of the following pairwise non-isomorphic algebras:
$$L_1:
\begin{array}{lll}
[N_{12},X]=a_{12,24} N_{24}, & [X,N_{12}]=-a_{12,24} N_{24}+b_{12,14}N_{14}, & \\[1mm]
[N_{23},X]=-[X,N_{23}]=N_{23}, & [N_{34},X]=-[X,N_{34}]=-N_{34}, & \\[1mm]
[N_{13},X]=-[X,N_{13}]=N_{13}, &  [X,X]=\sigma_{14}N_{14},
\end{array}
$$
where $(b_{12,14},\sigma_{14})\neq(0,0).$

$$L_2:
\begin{array}{lll}
[N_{12},X]=-[X,N_{12}]=N_{12}, & [N_{34},X]=-[X,N_{34}]=-N_{34} &  \\[1mm]
[N_{13},X]=-[X,N_{13}]=N_{13}, & [N_{24},X]=-[X,N_{24}]=-N_{24}, &\\[1mm]
[N_{23},X]=a_{23,14} N_{14}, & [X,N_{23}]=b_{23,14}N_{14}, &   \\[1mm]
[X,X]=\sigma_{14}N_{14}, &&
\end{array}$$
where $(a_{23,14}+b_{23,14},\sigma_{14})\neq(0,0).$

$$L_3:
\begin{array}{lll}
[N_{12},X]=-[X,N_{12}]=N_{12}, & [N_{23},X]=-[X,N_{23}]=a_{23,23}N_{23}, \\[1mm]
[N_{34},X]=-[X,N_{34}]=-(1+a_{23,23})N_{34}, & [N_{13},X]=-[X,N_{13}]=(1+a_{23,23})N_{13}, \\[1mm]  [N_{24},X]=-[X,N_{24}]=-N_{24}, & [X,X]=N_{14}.
\end{array}
$$
where $(1+a_{23,23})a_{23,23}\neq0.$
\end{thm}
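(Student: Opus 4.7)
My starting point is the presentation \eqref{eq4} together with the three polynomial restrictions on the parameters derived at the end of Section 3. Since $L(4,1)$ consists of non-nilpotent solvable algebras, at least one of $a_{12,12}$ and $a_{23,23}$ must be nonzero; and since we are interested only in genuinely non-Lie members, at least one of the parameters $b_{12,14}$, $a_{23,14}+b_{23,14}$, $b_{34,14}$, $\sigma_{14}$ must also be nonzero, for otherwise \eqref{eq3} and Lemma~\ref{lem3.1} force a Lie bracket. The plan is to split along the dichotomy ``$a_{12,12}=0$'' versus ``$a_{12,12}\neq 0$'', and to immediately kill the appropriate $b$-parameters in each branch using the restrictions $a_{12,12}b_{12,14}=0$, $a_{23,23}(a_{23,14}+b_{23,14})=0$ and $(a_{12,12}+a_{23,23})b_{34,14}=0$.

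In Case~1 ($a_{12,12}=0$) one has $a_{23,23}\neq 0$, together with $b_{23,14}=-a_{23,14}$ and $b_{34,14}=0$. I would first rescale $X$ by $a_{23,23}^{-1}$ to set $a_{23,23}=1$, then absorb $a_{23,14}$ into $N_{23}$ by a shift along $N_{14}$ and $a_{34,13}$ into $N_{34}$ by a shift along $N_{13}$. A direct check shows that these substitutions preserve the remaining brackets and deliver the normal form $L_1$, with the surviving non-Lie condition $(b_{12,14},\sigma_{14})\neq(0,0)$.

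Case~2 ($a_{12,12}\neq 0$) is first rescaled to $a_{12,12}=1$ and then splits into three subcases. If $a_{23,23}=0$, the shifts $N_{12}\mapsto N_{12}+\tfrac{1}{2}a_{12,24}N_{24}$ and $N_{34}\mapsto N_{34}-\tfrac{1}{2}a_{34,13}N_{13}$ eliminate $a_{12,24}$ and $a_{34,13}$ and produce $L_2$. If $a_{23,23}=-1$, then the symmetry of the nilradical $T(4)$ that swaps the outer positions $(1,2)\leftrightarrow(3,4)$ carries this algebra onto one from Case~1, so it contributes no new isomorphism class. If $a_{23,23}\notin\{0,-1\}$, the restrictions force $b_{12,14}=b_{34,14}=0$ and $b_{23,14}=-a_{23,14}$, leaving $\sigma_{14}\neq 0$ as the only remaining non-Lie invariant; a joint rescaling of $(N_{14},N_{24},N_{34})$ by $\sigma_{14}$ combined with the obvious shifts on $N_{12},N_{23},N_{34}$ normalizes $\sigma_{14}$ to $1$ and yields $L_3$, with $a_{23,23}$ remaining as a continuous modulus.

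The concluding step is pairwise non-isomorphism. I would compare the dimensions of the terms of the derived series for the three families: since $L_1,L_2,L_3$ differ in how many independent diagonal eigenvalues of $R_X$ are active on the nilradical, the derived-series dimensions already separate the families, while within $L_3$ the surviving parameter $a_{23,23}$ is a genuine modulus as seen from the spectrum of $R_X$. The main obstacle I anticipate is Subcase~2.2.1: identifying the explicit index permutation that realises the equivalence with Case~1 requires tracking how the off-diagonal coefficients $a_{12,24}$ and $a_{34,13}$ are interchanged under the symmetry, whereas the other normalizations are mechanical once the correct substitutions are written down.
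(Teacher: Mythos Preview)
Your proposal is correct and follows essentially the same route as the paper: the identical case split on $a_{12,12}$ and $a_{23,23}$, the same rescalings and shifts to reach $L_1,L_2,L_3$, the reduction of the $a_{23,23}=-1$ branch to Case~1 via the index symmetry of $T(4)$, and the derived-series argument for pairwise non-isomorphism. The only addition you make beyond the paper is invoking the spectrum of $R_X$ to separate values of $a_{23,23}$ within $L_3$, which is a reasonable refinement.
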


\textbf{The Leibniz algebras $L(4,2).$}

Classification of Leibniz algebras in this set is presented in the
following theorem.
\begin{thm} An arbitrary non-Lie Leibniz algebra of the set $L(4,2)$ admits a basis $\{N_{12}, N_{23}, N_{34}, N_{13}, N_{24}, N_{14}, X^1,X^2\}$ in which the table of multiplication has the following form:
$$[N_{12},X^1]=-[X^1,N_{12}]=N_{12}, \quad [N_{34},X^1]=-[X^1,N_{34}]=-N_{34},$$
$$[N_{13},X^1]=-[X^1,N_{13}]=N_{13}, \quad [N_{24},X^1]=-[X^1,N_{24}]=-N_{24},$$
$$[N_{23},X^2]=-[X^2,N_{23}]=N_{23}, \quad [N_{34},X^2]=-[X^2,N_{34}]=-N_{34},$$
$$[N_{13},X^2]=-[X^2,N_{13}]=N_{13}, \quad [X^1,X^1]=\sigma^{11}N_{14},$$
$$ [X^2,X^2]=\sigma^{22}N_{14}, \quad [X^1,X^2]=\sigma^{12}N_{14},  \quad [X^2,X^1]=\sigma^{21}N_{14}.$$
\end{thm}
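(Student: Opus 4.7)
The plan is to begin with the generic multiplication of $L(4,2)$ distilled at the end of Section 3 and reduce it to the stated normal form by combining the Leibniz identity with two kinds of basis changes: one in $Q=\langle X^1, X^2\rangle$ and one in the nilradical $T(4)$. I first normalise the diagonal parts. Each $X^\alpha$ carries the diagonal vector $(a^\alpha_{12,12},a^\alpha_{23,23},a^\alpha_{34,34})$, and Equation (\ref{eq3}) applied in the non-Lie case forces this vector to satisfy $a^\alpha_{14,14}=a^\alpha_{12,12}+a^\alpha_{23,23}+a^\alpha_{34,34}=0$, since $[N_{14},X^\alpha]=a^\alpha_{14,14}N_{14}$ by Lemma \ref{lem2.4}(iii). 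The corresponding trace-zero locus is two-dimensional, and nil-independence of $X^1, X^2$ means that their diagonals span it; a linear change of basis in $Q$ therefore brings them to $(1,0,-1)$ and $(0,1,-1)$, matching the diagonal pattern of the theorem.

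Next I would exploit the Leibniz identity $[N_{ij},[X^1,X^2]]=[[N_{ij},X^1],X^2]-[[N_{ij},X^2],X^1]$. Because $[N_{ij},N_{14}]=0$ for every $i<j$, the left hand side vanishes, while the right hand side, evaluated using the explicit products of Section 3 and the normalised diagonals, immediately gives $a^2_{12,24}=0$, $a^1_{23,14}=0$, and the single relation $a^1_{34,13}=a^2_{34,13}$. A parallel computation with the identity $[X^\beta,[N_{ij},X^\alpha]+[X^\alpha,N_{ij}]]$, combined with the standing restrictions $a^\alpha_{12,12}b^\alpha_{12,14}=a^\alpha_{34,34}b^\alpha_{34,14}=0$ and $a^\alpha_{23,23}(a^\alpha_{23,14}+b^\alpha_{23,14})=0$ from the end of Section 3, then forces every $b^\alpha_{ij,14}$ and $b^2_{23,14}$ to vanish.

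The remaining off-diagonal coefficients $a^1_{12,24}$, $a^2_{23,14}$ and the common value $a^1_{34,13}=a^2_{34,13}$ are eliminated by the substitutions $N_{12}'=N_{12}+\frac{1}{2}a^1_{12,24}N_{24}$, $N_{23}'=N_{23}+a^2_{23,14}N_{14}$ and $N_{34}'=N_{34}-\frac{1}{2}a^1_{34,13}N_{13}$. The key point that ensures compatibility is that the corrector summands $N_{24}$, $N_{14}$, $N_{13}$ lie in weight spaces of the opposite $X^\beta$ dictated by the trace-zero relations, so normalising against $X^1$ does not revive terms already cancelled for $X^2$ and vice versa. Finally, Lemma \ref{lem3.2} gives $[X^\alpha,X^\beta]=\sigma^{\alpha\beta}N_{14}$; every remaining Leibniz cross-check involving these brackets trivialises because $[N_{ij},N_{14}]=0$ and $[N_{14},X^\alpha]=[X^\alpha,N_{14}]=0$, so the four parameters $\sigma^{11}, \sigma^{22}, \sigma^{12}, \sigma^{21}$ survive as free. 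The main obstacle is precisely the bookkeeping of these normalisations: one must carry them out in an order that preserves previous cancellations, and the cross-relations derived in the second paragraph are exactly what makes this possible.
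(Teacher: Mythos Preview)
Your overall strategy matches the paper's: normalise the diagonals of $X^1,X^2$ by a linear change in $Q$, then use Leibniz identities to kill cross off-diagonal terms, then absorb the remaining ones by substitutions in $T(4)$. Your handling of $a^1_{34,13}=a^2_{34,13}$ via the extra substitution $N_{34}'=N_{34}-\tfrac12 a^1_{34,13}N_{13}$ is in fact more careful than the paper, which asserts both vanish.

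However, there is a real gap in your second step. The expression $[X^\beta,[N_{ij},X^\alpha]+[X^\alpha,N_{ij}]]$ gives no information here: after Lemma~\ref{lem3.1} the inner sum is already a scalar multiple of $N_{14}$, and by Equation~(\ref{eq3}) (which you have invoked for the non-Lie case) $[X^\beta,N_{14}]=0$, so the whole expression is identically zero regardless of the values of $b^\alpha_{ij,14}$. The standing restrictions from the end of Section~3 give, after your diagonal normalisation, only $b^1_{12,14}=b^1_{34,14}=b^2_{34,14}=0$ and $b^2_{23,14}=-a^2_{23,14}$; they do \emph{not} force $b^2_{12,14}$ or $b^1_{23,14}$ to vanish, and your proposed identity does not fill this. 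What you actually need are the genuine Leibniz identities on the mixed triples $(X^1,N_{23},X^2)$ and $(X^2,N_{12},X^1)$, i.e.\ $[X^1,[N_{23},X^2]]=[[X^1,N_{23}],X^2]-[[X^1,X^2],N_{23}]$ and its analogue; these are precisely the identities the paper uses, and they yield $b^1_{23,14}=0$ and $b^2_{12,14}=0$ directly. Also note that $b^2_{23,14}$ is not zero before your substitution of $N_{23}$; it equals $-a^2_{23,14}$, and only the combined substitution $N_{23}'=N_{23}+a^2_{23,14}N_{14}$ removes both $a^2_{23,14}$ and $b^2_{23,14}$ simultaneously. With these corrections, your argument goes through and coincides with the paper's.
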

\begin{proof}
From Lemmas \ref{lem3.1} and \ref{lem3.2} we have
$$\begin{array}{ll}
[N_{12},X^1]=a^1_{12,12}N_{12}+a^1_{12,24} N_{24}, & \\[1mm]
[X^1,N_{12}]=-a_{12,12}^1N_{12}-a^1_{12,24} N_{24}+b^1_{12,14}N_{14}, & \\[1mm]
[N_{23},X^1]=a^1_{23,23}N_{23}+a^1_{23,14} N_{14},  & \\[1mm]
[X^1,N_{23}]=-a^1_{23,23}N_{23}+b^1_{23,14}N_{14}, & \\[1mm]
[N_{34},X^1]=-(a^1_{12,12}+a^1_{23,23})N_{34}+a^1_{34,13} N_{13}, & \\[1mm]
[X^1,N_{34}]=(a^1_{12,12}+a^1_{23,23})N_{34}-a^1_{34,13} N_{13}+b^1_{34,14}N_{14}, & \\[1mm]
[N_{13},X^1]=-[X^1,N_{13}]=(a^1_{12,12}+a^1_{23,23})N_{13}, & \\[1mm]
[N_{24},X^1]=-[X^1,N_{24}]=-a^1_{12,12}N_{24}, & \\[1mm]
[N_{12},X^2]=a^2_{12,12}N_{12}+a^2_{12,24} N_{24},  & \\[1mm]
[X^2,N_{12}]=-a_{12,12}^2N_{12}-a^2_{12,24} N_{24}+b^2_{12,14}N_{14}, & \\[1mm]
[N_{23},X^2]=a^2_{23,23}N_{23}+a^2_{23,14} N_{14}, & \\[1mm]
[X^2,N_{23}]=-a^2_{23,23}N_{23}+b^2_{23,14}N_{14}, & \\[1mm]
[N_{34},X^2]=-(a^2_{12,12}+a^2_{23,23})N_{34}+a^2_{34,13} N_{13}, & \\[1mm]
[X^2,N_{34}]=(a^2_{12,12}+a^2_{23,23})N_{34}-a^2_{34,13} N_{13}+b^2_{34,14}N_{14}, & \\[1mm]
[N_{13},X^2]=-[X^2,N_{13}]=(a^2_{12,12}+a^2_{23,23})N_{13}, & \\[1mm]
[N_{24},X^2]=-[X^2,N_{24}]=-a^2_{12,12}N_{24}
\end{array}$$
with the restrictions
$$a_{12,12}^1b_{12,14}^1=a_{23,23}^1(a_{23,14}^1+b_{23,14}^1)=(a^1_{12,12}+a^1_{23,23})b_{34,14}^1=0,$$
$$a_{12,12}^2b_{12,14}^2=a_{23,23}^2(a_{23,14}^2+b_{23,14}^2)=(a^2_{12,12}+a^2_{23,23})b_{34,14}^2=0.$$

Taking the change of basis
$$X^{1'}=\frac{a^2_{23,23}}{a^1_{12,12}a^2_{23,23}-
a^2_{12,12}a^1_{23,23}}X^1-\frac{a^1_{23,23}}{a^1_{12,12}a^2_{23,23}-a^2_{12,12}a^1_{23,23}}X^2,$$
$$X^{2'}=-\frac{a^2_{12,12}}{a^1_{12,12}a^2_{23,23}-
a^2_{12,12}a^1_{23,23}}X^1+\frac{a^1_{12,12}}{a^1_{12,12}a^2_{23,23}-a^2_{12,12}a^1_{23,23}}X^2,$$
we deduce
$$\begin{array}{lll}
[N_{12},X^1]=-[X^1,N_{12}]=N_{12}+a^1_{12,24} N_{24}, & [N_{23},X^1]=a^1_{23,14} N_{14}, & \\[1mm]
[X^1,N_{23}]=b^1_{23,14}N_{14}, & [N_{34},X^1]=-[X^1,N_{34}]=-N_{34}+a^1_{34,13} N_{13}, & \\[1mm]
[N_{13},X^1]=-[X^1,N_{13}]=N_{13}, & [N_{24},X^1]=-[X^1,N_{24}]=-N_{24}, & \\[1mm]
[N_{12},X^2]=a^2_{12,24} N_{24},  & [X^2,N_{12}]=-a^2_{12,24} N_{24}+b^2_{12,14}N_{14}, & \\[1mm]
[N_{23},X^2]=-[X^2,N_{23}]=N_{23}+a^2_{23,14} N_{14}, & [N_{34},X^2]=-[X^2,N_{34}]=-N_{34}+a^2_{34,13} N_{13}, & \\[1mm]
[N_{13},X^2]=-[X^2,N_{13}]=N_{13}. & &
\end{array}$$

Applying Leibniz identity for the following triples of elements:
 $$(N_{12},X^1,X^2), \ (N_{23},X^1,X^2), \ (N_{34},X^1,X^2), \ (X^1,N_{23},X^2), \ (X^2,N_{12},X^1)$$ we get $$ a_{12,24}^2=a_{23,14}^1=a_{34,13}^1=a_{34,13}^2=b_{23,14}^1=b_{12,14}^2=0.$$

Finally, taking the basis transformation:
$$N_{12}'=N_{12}+\frac{a_{12,24}^1}{2}N_{24}, \ \
N_{23}'=N_{23}+a_{23,14}^2N_{14}$$ we obtain the table of
multiplication listed in the assertion of theorem.
\end{proof}

\end{document}